\newtheorem{theorem}{Theorem}[section]
\newtheorem{lemma}[theorem]{Lemma}
\newtheorem{remark}[theorem]{Remark}
\newcommand{\bbZ}{\mathbb{Z}}
\newcommand{\bbR}{\mathbb{R}}
\author[M.\ Luki\'c]{Milivoje Luki\'c}
\address{M.\ Luki\'c: Emory University, Atlanta, GA~30322, USA}
\email{milivoje.lukic@emory.edu}
\thanks{M.\ L.\ was supported in part by NSF grants DMS--2154563 and DMS--2453758.}
\author[B.\ Simanek]{Brian Simanek}
\address{B. Simanek: Department of Mathematics, Baylor University, Waco, TX, USA}
\email{brian\_simanek@baylor.edu}
\thanks{B.\ S.\ was supported by Simons Foundation Collaboration grant 707882.}
\title{Upper bounds on eigenvalue spacing for decaying potentials}
\date\today
\begin{document}

\dedicatory{Dedicated to Barry Simon on the occasion of his 80th birthday}

\begin{abstract}
We study decaying half-line Schr\"odinger operators and the local eigenvalue spacing of their Dirichlet restrictions. While absolutely continuous spectrum is strongly associated with bulk universality and clock behavior, singular spectral measures can correspond to varied local behaviors. 
In this work, the rate of decay of the potential is shown to give upper bounds for the spacing of Dirichlet eigenvalues on finite intervals.
\end{abstract}

\maketitle

\section{Introduction}

We consider a half-line Schr\"odinger operator $H = -\frac{d^2}{dx^2} + V$ with a real-valued potential $V: (0,\infty) \to \bbR$ which is locally integrable, i.e.,
\[
\int_0^x \lvert V(t) \rvert \,dt < \infty, \qquad \forall x > 0.
\]
The goal of this work is to relate properties of the potential $V$ with the spectra of the Schr\"odinger operators $H_X =-\frac{d^2}{dx^2} + V$ on $L^2((0,X))$ with Dirichlet boundary conditions at $0$ and $X$, for $X \in (0,\infty)$.  
These operators have only discrete spectrum, and our main results will provide upper bounds on the eigenvalue spacing of $H_X$ in terms of the growth rate of the integral
\[
\int_0^x \lvert V(t)\rvert dt
\]
as a function of $x$. As a general principle, it is qualitatively true that greater regularity in the spacing between eigenvalues of $H_X$ is indicative of absolutely continuous spectrum of $H$, while the presence of singular spectrum of $H$ yields greater clustering of the eigenvalues of $H_X$. 

A common way to study spectral properties of differential operators is to discretize them.  Analogs of Schr\"odinger operators in a discrete setting are often found in the theory of orthogonal polynomials, either on the real line or on the unit circle.  In these settings, the analog of the potential $V$ is the sequence of recursion coefficients from the recursion relation satisfied by the orthogonal polynomials.  In \cite{FineIV}, Last and Simon provided estimates on the spacing between zeros of orthogonal polynomials on the real line (which are eigenvalues of self-adjoint matrices with simple spectrum) in terms of decay properties of the recursion coefficients.  In the paper \cite{Sim20}, the second author provided estimates on the spacing between zeros of paraorthogonal polynomials (which are also eigenvalues of unitary matrices with simple spectrum) in terms of decay properties of the sequence of recursion coefficients.  Some of the results in \cite{Sim20} were refinements of earlier results by Golinskii from \cite{Gol02}.  Further results relating zero spacings of paraorthogonal polynomials with the spectrum of the underlying operator can be found in \cite{BS20}.  In particular, the papers \cite{Br11,BS20} show that the aforementioned connection between regularity in eigenvalue spacing and regularity of the spectrum is only a qualitative phenomenon and not a strict rule.

We can now state our main theorem about eigenvalues of $H_X$, which is the analog of the main result from \cite{Sim20} in the setting of half-line Schr\"odinger operators.

\begin{theorem}\label{thm1}
Fix a real-valued locally integrable potential $V$. Fix $a > 0$. There exists a function $h$ of order
\[
h(x) = O \left( \frac 1x \left(1+\int_0^x \lvert V(t) \rvert dt \right) \right), \quad x \to\infty
\]
such that  for any  interval $[\alpha,\beta] \subset [a,\infty)$ of size $\lvert \beta - \alpha \rvert \ge h(X)$ and any $X > 0$, the operator $H_X$ has at least one eigenvalue in $[\alpha^2/4,\beta^2/4]$.
\end{theorem}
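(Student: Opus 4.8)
The plan is to pass to modified Pr\"ufer variables, which convert the Dirichlet eigenvalue condition for $H_X$ into the statement that a continuous angle function attains a multiple of $\pi$, and then to apply the intermediate value theorem. Fix $E>0$ and set $k=\sqrt E$. Let $u(\cdot,E)$ solve $-u''+Vu=Eu$ on $(0,X)$ with $u(0,E)=0$ and $u'(0,E)=k$; since $V\in L^1_{\loc}$, both $u$ and $u'$ are absolutely continuous and never vanish simultaneously, so we may write $u=R\sin\theta_E$, $u'=kR\cos\theta_E$ with $R>0$, $\theta_E$ absolutely continuous, and $\theta_E(0)=0$. A standard computation gives $\theta_E'(x)=k-\frac{V(x)}{k}\sin^2\theta_E(x)$ for a.e.\ $x$, hence
\[
\theta_E(X)=kX-\frac1k\int_0^X V(t)\sin^2\theta_E(t)\,dt,\qquad\text{so}\qquad \bigl|\theta_E(X)-kX\bigr|\le \frac1k\int_0^X|V(t)|\,dt .
\]
Moreover $E$ is a Dirichlet eigenvalue of $H_X$ exactly when $u(X,E)=0$, i.e.\ when $\theta_E(X)\in\pi\bbZ$, and $E\mapsto\theta_E(X)$ is continuous by continuous dependence of linear ODEs on parameters.

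Now fix $[\alpha,\beta]\subset[a,\infty)$ and $X>0$, and put $E_\alpha=\alpha^2/4$, $E_\beta=\beta^2/4$, so $\sqrt{E_\alpha}=\alpha/2\le\beta/2=\sqrt{E_\beta}$. Subtracting the two integral identities above and bounding both potential terms by $\frac{2}{k}\int_0^X|V|$ with $k=\alpha/2$ gives
\[
\theta_{E_\beta}(X)-\theta_{E_\alpha}(X)\ \ge\ \frac{(\beta-\alpha)X}{2}-\frac{4}{\alpha}\int_0^X|V(t)|\,dt\ \ge\ \frac{(\beta-\alpha)X}{2}-\frac4a\int_0^X|V(t)|\,dt .
\]
Thus if we set
\[
h(x)=\frac2x\left(\pi+\frac4a\int_0^x|V(t)|\,dt\right),
\]
then $|\beta-\alpha|\ge h(X)$ forces $\theta_{E_\beta}(X)-\theta_{E_\alpha}(X)\ge\pi$. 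Consequently the closed interval with endpoints $\theta_{E_\alpha}(X)$ and $\theta_{E_\beta}(X)$ has length at least $\pi$, hence contains $m\pi$ for some $m\in\bbZ$; by the intermediate value theorem there is $E^*\in[E_\alpha,E_\beta]=[\alpha^2/4,\beta^2/4]$ with $\theta_{E^*}(X)=m\pi$, which is precisely a Dirichlet eigenvalue of $H_X$ in $[\alpha^2/4,\beta^2/4]$. Finally $h(x)=\frac{2\pi}x+\frac{8}{ax}\int_0^x|V|=O\!\left(\frac1x\bigl(1+\int_0^x|V|\bigr)\right)$ as $x\to\infty$, and $h$ depends only on $V$ and $a$, as required.

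The conceptual point to get right — and essentially the whole content — is that one must estimate the \emph{difference} $\theta_{E_\beta}(X)-\theta_{E_\alpha}(X)$ of angle values at the two endpoints, not the $E$-derivative of $\theta_E(X)$: linearizing the Pr\"ufer equation in $E$ produces exponential factors of the form $e^{\pm\frac1k\int_0^X|V|}$ which would spoil the bound, whereas the integrated Pr\"ufer identity above shows the potential enters the difference only linearly through $\frac1k\int_0^X|V|$, which is small compared with $kX$ once the interval is wide enough. The remaining points are routine: the Pr\"ufer identity and the absolute continuity of $\theta_E$ in the merely locally integrable setting, and the joint continuity of $\theta_E(X)$ in $E$ so that the intermediate value theorem applies. (If one wanted the eigenvalue located inside the open interval, or wanted to count multiplicities, one would additionally use the Sturm oscillation monotonicity of $E\mapsto\theta_E(X)$, but this is not needed for the statement as given.)
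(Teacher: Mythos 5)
Your proof is correct and follows essentially the same route as the paper: the paper works with the total Pr\"ufer phase $f_k(x)=kx+\theta_k(x)$, whose derivative $k+\frac{V}{2k}[\cos(2kx+2\theta_k)-1]$ is exactly your $k-\frac{V}{k}\sin^2\theta_E$, derives the same bound $\lvert f_k(X)-kX\rvert\le\frac1k\int_0^X\lvert V\rvert\,dt$, forces $f_\beta(X)-f_\alpha(X)\ge\pi$, and applies the intermediate value theorem in $k$ to hit $\pi\bbZ$. The only difference is normalization: you take $k=\sqrt{E}\in[\alpha/2,\beta/2]$ so your conclusion lands in $[\alpha^2/4,\beta^2/4]$ exactly as the theorem states, whereas the paper's Lemma~\ref{lemmaFiniteLength} places the eigenvalue in $[\alpha^2,\beta^2]$ (a factor-of-$4$ normalization mismatch between the paper's lemma and its theorem statement), which accounts for the harmless difference in the constants appearing in your $h$.
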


The $1+$ in the formula for $h$ in Theorem \ref{thm1} is needed to accommodate the case $V=0$, otherwise it can be ignored. In the case $V \in L^1((0,\infty))$, we have $h(x) = O(1/x)$. For $V \in L^1((0,\infty))$, the canonical spectral measure is purely absolutely continuous on $(0,\infty)$ with positive continuous density, so by \cite{Malt10} (see also \cite{FineIV}), a bulk universality limit holds uniformly on compact intervals, which implies uniform clock spacing with asymptotic distance $1/(2X\sqrt E)$, which matches our $h(x) = O(1/x)$.

Bulk universality holds a.e.\ on an essential support of the absolutely continuous spectrum (see \cite{AvilaLastSimon,ELS25,ELW25}), but when singular spectrum is present, one can observe less regular spacing of the eigenvalues. Accordingly, the more interesting application of Theorem~\ref{thm1} is to $V \notin L^1((0,\infty))$. Decay slower than $L^1$ leads to spectral transitions which have been heavily studied  
\cite{Kiselev96,Kiselev98,Remling98,RemlingPAMS1998,RemlingDuke2000,ChristKiselev98,CAK2001CMP,ChristKiselev,ChristKiselev02,DeiftKillip,KillipSimon,LukicWang};
see also surveys \cite{DenisovKiselev,Killip}. These results guarantee presence of absolutely continuous spectrum for $V\in L^2((0,\infty))$ and restrict the allowed singular continuous spectrum for decay conditions between $L^1$ and $L^2$. Decay assumptions on the potential are often stated using amalgamated norms for $p\in (1,\infty)$,
\[
\lVert V \rVert_{\ell^p(L^1)} = \left( \sum_{n=0}^\infty v_n^p \right)^{1/p},
\]
where 
\[
v_n = \int_n^{n+1}\lvert V(x) \rvert dx,
\]
and their weak counterparts,
\[
\lVert V \rVert_{\ell^p_w(L^1)} = \sup_{s\in (0,\infty)} s \left( \# \{ n \mid v_n > s \} \right)^{1/p}.
\]
These norms have the benefit of imposing weaker decay assumptions than $L^1$, without imposing stronger local assumptions than local integrability.

\begin{remark}\label{remark12}
Although the growth rate of $\int_0^x |V(t)|dt$ may seem like a power law condition on $V$, we note that it is controlled by $L^p$ and amalgamated $L^p$ conditions and their weak counterparts. For any $p \in (1,\infty)$:
\begin{enumerate}
\item[(a)] If $V \in L^p((0,\infty))$ or $V \in \ell^p(L^1)$, then
\begin{equation}\label{eqnConditionp}
\int_0^x \lvert V(t) \rvert\,dt = o(x^{1-1/p}), \quad x\to\infty.
\end{equation}
\item[(b)] If $V \in L^p_w((0,\infty))$ or $V \in \ell^p_w(L^1)$, then
\begin{equation}\label{eqnWeakCondition}
\int_0^x \lvert V(t) \rvert dt = O(x^{1-1/p}), \quad x \to \infty.
\end{equation}
\end{enumerate}
\end{remark}

The proof of Theorem~\ref{thm1} uses Pr\"ufer variables, which quantify how a perturbation (such as a decaying potential) affects eigensolutions. Pr\"ufer variables have been introduced in many one-dimensional settings, see \cite{KLS,KRS,SimonOPUC2,LukicOng,LukicSukhtaievWang}.

In Section~\ref{sect2} we provide proofs of the statements in Remark~\ref{remark12}. In Section~\ref{sect3} we prove Theorem~\ref{thm1}.

\section{Growth rate estimates}\label{sect2}

Recall first that $V\in L^p$ implies $V \in \ell^p(L^1)$, since by H\"older's inequality, $\lvert v_n \rvert^p \le \int_n^{n+1} \lvert V(t)\rvert^p \,dt$.

\begin{lemma}
For $p > 1$, $V \in \ell^p(L^1)$ implies \eqref{eqnConditionp}.
\end{lemma}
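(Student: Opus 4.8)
The plan is to estimate $\int_0^x |V(t)|\,dt$ by the partial sums of $v_n$ and then apply a summation argument to the $\ell^p$ sequence $(v_n)$. First I would note that for $x \in [N, N+1)$ we have
\[
\int_0^x |V(t)|\,dt \le \sum_{n=0}^{N} v_n,
\]
so it suffices to show $\sum_{n=0}^N v_n = o(N^{1-1/p})$ as $N \to \infty$, since $N^{1-1/p}$ and $x^{1-1/p}$ are comparable on $[N,N+1)$. Writing $q$ for the conjugate exponent ($1/p + 1/q = 1$), H\"older's inequality gives
\[
\sum_{n=0}^N v_n \le \left( \sum_{n=0}^N v_n^p \right)^{1/p} (N+1)^{1/q} = \left( \sum_{n=0}^N v_n^p \right)^{1/p} (N+1)^{1-1/p}.
\]
This already yields the $O(x^{1-1/p})$ bound, but we want little-$o$.

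To upgrade to $o$, the key observation is that since $V \in \ell^p(L^1)$, the tail $\sum_{n \ge M} v_n^p \to 0$ as $M \to \infty$. So I would fix $\varepsilon > 0$, choose $M$ with $\left(\sum_{n \ge M} v_n^p\right)^{1/p} < \varepsilon$, and split
\[
\sum_{n=0}^N v_n = \sum_{n=0}^{M-1} v_n + \sum_{n=M}^N v_n.
\]
The first sum is a fixed constant $C_M$, hence $o(N^{1-1/p})$. For the second sum, H\"older applied only over $n \in [M, N]$ gives
\[
\sum_{n=M}^N v_n \le \left( \sum_{n=M}^N v_n^p \right)^{1/p} (N - M + 1)^{1-1/p} \le \varepsilon \, (N+1)^{1-1/p}.
\]
Combining, $\limsup_{N\to\infty} N^{-(1-1/p)} \sum_{n=0}^N v_n \le \varepsilon$, and since $\varepsilon$ is arbitrary this limsup is $0$, which is \eqref{eqnConditionp}.

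There is no serious obstacle here; the only point requiring a little care is the passage from sums over integers to the integral over $x$ and the comparison of $N^{1-1/p}$ with $x^{1-1/p}$ for $x \in [N,N+1)$, which is immediate since the ratio is bounded. The case $V \in L^p$ is then covered by the remark preceding the lemma, namely $V \in L^p \Rightarrow V \in \ell^p(L^1)$.
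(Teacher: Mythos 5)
Your argument is correct and is essentially the paper's own proof: both split off a finite head (which contributes only a constant) and apply H\"older's inequality to the tail starting from an index where the $\ell^p$ tail norm is below $\varepsilon$, yielding the $o(x^{1-1/p})$ bound. The only cosmetic difference is that you spell out the comparison between the integer-indexed sums and the integral over $x\in[N,N+1)$, which the paper handles by noting the integral is increasing in $x$.
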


\begin{proof}
For any $\epsilon > 0$, we can pick $n_0$ such that $\sum_{n_0}^\infty |v_n|^p < \epsilon^p$. Then by H\"older's inequality, for $n > n_0$,
\[
\int_{n_0}^n |V(t)| \,dt = \sum_{k=n_0}^{n-1} |v_k| \le \left(\sum_{k=n_0}^{n-1} |v_k|^p \right)^{1/p} (n - n_0)^{1-1/p} \le \epsilon n^{1-1/p}.
\]
Since the integral $\int_0^x \lvert V(t) \rvert dt$ is increasing in $x$, this implies  \eqref{eqnConditionp}.
\end{proof}

\begin{lemma}
For $p > 1$, $V \in L^p_w$ implies \eqref{eqnWeakCondition}.
\end{lemma}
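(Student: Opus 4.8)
The plan is to run the standard truncation (distribution function) argument that turns a weak-$L^p$ bound into an $L^1$-growth estimate. Write $C = \lVert V \rVert_{L^p_w((0,\infty))}$, so that by definition of the weak norm
\[
\bigl\lvert \{ t \in (0,\infty) : \lvert V(t) \rvert > s \} \bigr\rvert \le \left( \frac{C}{s} \right)^{p}, \qquad s > 0 .
\]
For a threshold $\lambda > 0$ to be chosen later, I would split
\[
\int_0^x \lvert V(t) \rvert \, dt = \int_0^x \lvert V(t) \rvert \,\mathbf 1_{\{\lvert V(t)\rvert \le \lambda\}}\, dt + \int_0^x \lvert V(t) \rvert \,\mathbf 1_{\{\lvert V(t)\rvert > \lambda\}}\, dt ,
\]
where the first term is at most $\lambda x$ trivially.

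For the second term I would apply the layer-cake formula $\int g = \int_0^\infty \lvert \{ g > s \} \rvert \, ds$ to $g = \lvert V \rvert \mathbf 1_{\{\lvert V\rvert > \lambda\}}$ restricted to $(0,x)$. The only bookkeeping point is identifying the superlevel sets: for $0 \le s < \lambda$ one has $\{ t \in (0,x) : g(t) > s \} = \{ t \in (0,x) : \lvert V(t)\rvert > \lambda \}$, whose measure is at most $(C/\lambda)^p$, while for $s \ge \lambda$ one has $\{ t \in (0,x) : g(t) > s \} = \{ t \in (0,x) : \lvert V(t)\rvert > s \}$, whose measure is at most $(C/s)^p$. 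Integrating and using $p>1$ for convergence,
\[
\int_0^x \lvert V(t) \rvert \,\mathbf 1_{\{\lvert V(t)\rvert > \lambda\}}\, dt \le \lambda \left(\frac{C}{\lambda}\right)^{p} + \int_\lambda^\infty \left(\frac{C}{s}\right)^{p} ds = \frac{p}{p-1}\, C^{p} \lambda^{1-p} .
\]
Combining the two pieces gives $\int_0^x \lvert V(t)\rvert\, dt \le \lambda x + \frac{p}{p-1} C^p \lambda^{1-p}$ for every $\lambda > 0$ and every $x > 0$. Choosing $\lambda = x^{-1/p}$ (or optimizing exactly in $\lambda$, which yields the same power of $x$ with the sharper constant $\tfrac{p^{1+1/p}}{p-1}\,C$) makes both terms a constant multiple of $x^{1-1/p}$, which is exactly \eqref{eqnWeakCondition}.

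I do not anticipate a real obstacle here; the argument is elementary. The only place demanding a moment of care is the layer-cake step — correctly describing the superlevel sets of the truncated function on the two ranges $s<\lambda$ and $s\ge\lambda$, and remembering to integrate $(C/s)^p$ over $(\lambda,\infty)$ rather than $(0,\infty)$, since the latter diverges for $p>1$, which is precisely why the truncation at level $\lambda$ is introduced. It is worth noting that, in contrast to the $\ell^p(L^1)$ lemma above, no ``$o$''-improvement holds: the example $V(t) = t^{-1/p}$ (for $t \ge 1$, say) lies in $L^p_w((0,\infty))$ yet has $\int_0^x \lvert V(t)\rvert\,dt \asymp x^{1-1/p}$, so \eqref{eqnWeakCondition} is sharp.
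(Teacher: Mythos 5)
Your argument is correct and is essentially the same as the paper's: both rest on the layer-cake formula and the bound $\lvert\{t\in[0,x]:\lvert V(t)\rvert>s\}\rvert\le\min\{x,(C/s)^p\}$, with the $s$-integral split at a threshold of order $x^{-1/p}$. The paper just applies layer-cake directly to $\lvert V\rvert$ on $[0,x]$ rather than truncating first, which is a purely presentational difference; your optimization in $\lambda$ and the sharpness example $V(t)=t^{-1/p}$ are correct but not needed for the $O(x^{1-1/p})$ conclusion.
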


\begin{proof}
Denote $\delta = \lVert V \rVert_{L^p_w}$. By definition, for all $s >0$,
\[
\lvert \{ x \in [0,\infty) \mid \lvert V(x) \rvert > s \} \rvert \le \frac{\delta^p}{s^{p}}.
\]
This implies
\begin{align*}
\int_0^{x}|V(t)|dt & = \int_0^{\infty}|\{t\in[0,x]:|V(t)|>s\}|ds\leq\int\min\left\{x,\frac{\delta^p}{s^p}\right\}ds\\
&=\int_0^{\delta x^{-1/p}}xds+\int_{\delta x^{-1/p}}^{\infty}\frac{\delta^p}{s^p}ds=\delta x^{1-1/p}+\frac{\delta}{p-1}x^{1-1/p}
\end{align*}
\end{proof}

\begin{lemma}
For $p > 1$, $v \in \ell^p_w$ implies \eqref{eqnWeakCondition}.
\end{lemma}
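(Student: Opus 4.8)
The plan is to reduce the statement $v \in \ell^p_w \implies \eqref{eqnWeakCondition}$ to the estimate already obtained in the previous lemma by mimicking the layer-cake computation at the level of the sequence $(v_n)$ rather than the function $V$. First I would fix $x > 0$ and let $N = \lceil x \rceil$, so that $\int_0^x \lvert V(t)\rvert\,dt \le \sum_{n=0}^{N-1} v_n$; the goal becomes a bound of the form $\sum_{n=0}^{N-1} v_n = O(N^{1-1/p})$, with the implied constant depending only on $\delta := \lVert v \rVert_{\ell^p_w}$. By definition of the weak norm, for every $s > 0$ we have $\#\{n : v_n > s\} \le \delta^p/s^p$.

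Next I would write $\sum_{n=0}^{N-1} v_n = \sum_{n=0}^{N-1}\int_0^\infty \mathbf 1_{\{v_n > s\}}\,ds = \int_0^\infty \#\{0 \le n < N : v_n > s\}\,ds$, and bound the counting function by $\min\{N, \delta^p/s^p\}$, exactly as in the proof for $V \in L^p_w$. Splitting the integral at $s_0 = \delta N^{-1/p}$ gives $\int_0^{s_0} N\,ds + \int_{s_0}^\infty \delta^p s^{-p}\,ds = \delta N^{1-1/p} + \frac{\delta}{p-1} N^{1-1/p}$. Since $N \le x+1$, this yields $\int_0^x \lvert V(t)\rvert\,dt \le \frac{p}{p-1}\delta (x+1)^{1-1/p} = O(x^{1-1/p})$, which is \eqref{eqnWeakCondition}.

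There is really no substantial obstacle here: the argument is a discrete transcription of the preceding lemma, and the only minor bookkeeping point is handling the passage from the integral $\int_0^x$ to the finite sum $\sum_{n<N}$ and absorbing the harmless $(x+1)^{1-1/p}$ versus $x^{1-1/p}$ discrepancy into the $O(\cdot)$. If one prefers to avoid even that, one can instead apply the layer-cake identity directly to the step function $W(t) = v_{\lfloor t\rfloor}$, which dominates $\lvert V\rvert$ in the sense that $\int_0^N W = \sum_{n<N} v_n$ and whose distribution function satisfies $\lvert\{t : W(t) > s\}\rvert = \#\{n : v_n > s\} \le \delta^p/s^p$, so that $W \in L^p_w$ with $\lVert W\rVert_{L^p_w} \le \delta$ and the previous lemma applies verbatim to $W$. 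Either route completes the proof.
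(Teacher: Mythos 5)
Your argument is correct and is essentially identical to the paper's proof: both apply the layer-cake identity to the partial sums $\sum_{n=0}^{N-1} v_n$, bound the counting function by $\min\{N,\delta^p/s^p\}$, and split the integral at $s=\delta N^{-1/p}$ to obtain the $O(N^{1-1/p})$ bound, then pass from integer $N$ to general $x$ by monotonicity of $\int_0^x\lvert V(t)\rvert\,dt$. The alternative route via the step function $W$ is a nice observation but not needed.
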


\begin{proof}
Denote $\delta = \lVert v \rVert_{\ell^p_w}$. Then by definition,
\[
\# \{ n \mid v_n > s \} \le \frac{\delta^p}{s^{p}}.
\]
For a positive integer $N$,
\begin{align*}
\sum_{n=0}^{N-1} v_n & = \int_0^\infty \# \{n \mid n < N, v_n > s\} ds \le \int_0^\infty \min \{ N, \delta^p / s^p \} ds \\
&=\int_0^{\delta N^{-1/p}}xds+\int_{\delta N^{-1/p}}^{\infty}\frac{\delta^p}{s^p}ds=\delta N^{1-1/p}+\frac{\delta}{p-1}N^{1-1/p}
\end{align*}
Since the integral $\int_0^x \lvert V(t) \rvert dt$ is increasing in $x$, this implies \eqref{eqnWeakCondition}.
\end{proof}

\section{The main proof}\label{sect3}

We will consider the Dirichlet eigensolutions at energy $E = k^2 > 0$, which solve the Schr\"{o}dinger equation
\[
-u''(x)+V(x)u(x)=Eu(x)
\]
and satisfy the initial conditions $u(0)=0, u'(0)=1$. 
The Pr\"{u}fer variables $R$ and $\theta$ are defined so
\begin{align}
u(x)&=R_{k}(x)\sin(k x +\theta_{k}(x))\\
u'(x)&= k R_{k}(x)\cos(k x +\theta_{k}(x))
\end{align}
The Pr\"ufer phase obeys the first order equation
\begin{equation}\label{thetaprime}
\frac{d\theta_{k}}{dx}=\frac{V(x)}{2k}[\cos(2k x+2\theta_{k}(x))-1].
\end{equation}
Define
\begin{equation}\label{fdef}
f_{k}(x)= k x + \theta_{k}(x).
\end{equation}
In analogy with the main results of \cite{Sim20}, the goal is to understand quantities like
\[
f_{\beta}(x)-f_{\alpha}(x)
\]
as $x\rightarrow\infty$.  Our next lemma is concerned only with fixed $x$:

\begin{lemma}\label{lemmaFiniteLength}
Fix $[\alpha,\beta] \subset (0,\infty)$ and $x > 0$. If
\begin{equation}\label{eqnCriterionx}
(\beta - \alpha) x \ge \pi + (\beta^{-1} + \alpha^{-1}) \int_0^x \lvert V(t) \rvert \,dt
\end{equation}
then the operator $H_x$ has an eigenvalue in $[\alpha^2, \beta^2]$.
\end{lemma}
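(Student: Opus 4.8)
The plan is to track the Pr\"ufer phase $\theta_k(x)$ as a function of $k$ for the two energies $E = \alpha^2$ and $E = \beta^2$, and to show that the quantity $f_\beta(x) - f_\alpha(x)$ exceeds $\pi$ under the hypothesis \eqref{eqnCriterionx}. The connection to eigenvalues is the standard oscillation-theoretic fact: $\beta^2$ (resp. $\alpha^2$) is a Dirichlet eigenvalue of $H_x$ precisely when $\theta_\beta(x) \in \pi\mathbb{Z}$ (resp. $\theta_\alpha(x) \in \pi\mathbb{Z}$), since $u(x) = R_k(x)\sin(kx + \theta_k(x))$ vanishes exactly then; and as $E$ increases through the interval $[\alpha^2,\beta^2]$, the number of zeros of the Dirichlet solution $u$ on $(0,x)$ — which equals the number of eigenvalues of $H_x$ below $E$ — is monotone. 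So it suffices to show $f_\beta(x)$ and $f_\alpha(x)$ are far enough apart to force at least one multiple of $\pi$ to be crossed, i.e. $f_\beta(x) - f_\alpha(x) \ge \pi$.

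The first step is to bound $|\theta_k(x)|$ using \eqref{thetaprime}. Since $u(0)=0, u'(0)=1$, we have $\theta_k(0) = 0$, and integrating \eqref{thetaprime} gives
\[
|\theta_k(x)| \le \frac{1}{2k} \int_0^x |V(t)|\, |\cos(2kt + 2\theta_k(t)) - 1|\, dt \le \frac 1k \int_0^x |V(t)|\,dt,
\]
using $|\cos(\cdot) - 1| \le 2$. The second step is to combine this with the definition \eqref{fdef}:
\[
f_\beta(x) - f_\alpha(x) = (\beta - \alpha) x + \theta_\beta(x) - \theta_\alpha(x) \ge (\beta - \alpha) x - |\theta_\beta(x)| - |\theta_\alpha(x)| \ge (\beta - \alpha) x - (\beta^{-1} + \alpha^{-1}) \int_0^x |V(t)|\,dt.
\]
Under hypothesis \eqref{eqnCriterionx}, the right-hand side is $\ge \pi$, so $f_\beta(x) - f_\alpha(x) \ge \pi$.

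The third step is to convert $f_\beta(x) - f_\alpha(x) \ge \pi$ into the existence of an eigenvalue in $[\alpha^2,\beta^2]$. I would argue via the continuous family of solutions $u(\cdot\,; E)$ for $E \in [\alpha^2,\beta^2]$: the function $E \mapsto f_{\sqrt E}(x) = \sqrt E\, x + \theta_{\sqrt E}(x)$ is continuous (indeed the Pr\"ufer phase depends continuously, in fact real-analytically, on $E$ through the ODE), and by the displayed bound it increases by at least $\pi$ as $E$ runs from $\alpha^2$ to $\beta^2$. Hence there is some $E_0 \in [\alpha^2,\beta^2]$ with $f_{\sqrt{E_0}}(x) \equiv 0 \pmod \pi$, i.e. $\sin(\sqrt{E_0}\, x + \theta_{\sqrt{E_0}}(x)) = 0$, so the solution $u(\cdot\,;E_0)$ with $u(0) = 0$ also satisfies $u(x) = 0$ and is therefore a Dirichlet eigenfunction of $H_x$ with eigenvalue $E_0 \in [\alpha^2,\beta^2]$.

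The main obstacle I anticipate is justifying the continuity (and the correct monotone behavior) of the Pr\"ufer phase in the energy variable with only local integrability of $V$, and making sure the oscillation-theoretic bookkeeping — that $f$ genuinely sweeps through a full multiple of $\pi$ rather than oscillating back and forth across the same value — is airtight; a clean way around the latter is to note that it is enough to have $f_\beta(x) - f_\alpha(x) \ge \pi$ together with continuity in $E$, which guarantees the intermediate value $f_{\sqrt{E_0}}(x) \in \pi\mathbb{Z}$ is attained regardless of monotonicity. The $\epsilon$-level technicalities (continuous dependence of solutions of $-u'' + Vu = Eu$ on $E$ for $L^1_{\mathrm{loc}}$ potentials) are standard but should be cited or briefly indicated.
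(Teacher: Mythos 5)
Your proposal is correct and takes essentially the same approach as the paper: both integrate the Pr\"ufer phase equation \eqref{thetaprime}, use $\lvert \cos - 1 \rvert \le 2$ to deduce $f_\beta(x) - f_\alpha(x) \ge \pi$ from \eqref{eqnCriterionx}, and conclude via continuity in $k$ that some $f_k(x)$ lands in $\pi\bbZ$, producing a Dirichlet eigenvalue. The only cosmetic difference is that you bound $\lvert\theta_\alpha(x)\rvert$ and $\lvert\theta_\beta(x)\rvert$ separately and apply the triangle inequality, while the paper bounds the two integrals in the expression for $f_\beta(x)-f_\alpha(x)$ directly; the resulting estimate is identical, and your remark that monotonicity is not needed (only the intermediate value theorem) matches the paper's argument.
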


\begin{remark}\label{remark31}
Adding a constant $c$ to the potential $V$ and to the energy levels $\alpha^2$, $\beta^2$ does not effect eigenvalue spacings, but affects \eqref{eqnCriterionx}. Optimizing in $c$ could lead to a better statement for a fixed interval; however, for our intended application to decaying potentials, adding a constant $c$ would increase the growth rate of $h$.
\end{remark}

\begin{proof}
According to \eqref{fdef} and \eqref{thetaprime}, we have
\begin{align*}
f_{\beta}(x)-f_{\alpha}(x)&= (\beta - \alpha) x +\int_0^x\frac{V(t)}{2\beta}
[\cos(\beta t +\theta_{\beta}(t))-1]dt\\
&\qquad-\int_0^x\frac{V(t)}{2\alpha}[\cos(\alpha t +\theta_{\alpha}(t))-1]dt
\end{align*}
Using $\lvert \cos - 1 \rvert \le 2$ and \eqref{eqnCriterionx}, this implies 
\[
f_{\beta}(x) - f_\alpha(x) \ge \pi.
\]
By continuity in $k$, there exists $k \in [\alpha,\beta]$ such that $f_k(x) \in \pi \bbZ$. Therefore, for that value of $k$, $u(x) = 0$, so $k^2 \in \sigma(H_x)$.
\end{proof}

\begin{proof}[Proof of Theorem~\ref{thm1}]
Fix $a > 0$ and define
\[
h(x) = \frac \pi x + \frac 2{ax}\int_0^x \lvert V(t) \rvert \, dt.
\]
Then for $[\alpha,\beta] \subset [a,\infty)$ with $\beta - \alpha \ge h(x)$, we have
\[
(\beta -\alpha)x \ge \pi + \frac 2a \int_0^x \lvert V(t) \rvert \, dt
\]
which, with $\alpha,\beta \ge a$, implies \eqref{eqnCriterionx}. Thus, Lemma~\ref{lemmaFiniteLength} concludes the proof.
\end{proof}

\end{document}